


\documentclass[final,1p,times,authoryear]{elsarticle}



\usepackage{amsmath} 
\usepackage{bm}
\usepackage{amsthm}
\usepackage{hypernat}
\usepackage[hidelinks]{hyperref}
\usepackage[T1]{fontenc}
\usepackage{booktabs}

\usepackage{natbib}
\setcitestyle{numbers,square}






\everymath{\displaystyle}

\theoremstyle{definition}

\theoremstyle{plain}

\newtheorem{thm}{Theorem}[section]
 
 \newtheorem{lem}[thm]{Lemma}
 \newtheorem{prop}[thm]{Proposition}
 \theoremstyle{definition}
 \newtheorem{defn}[thm]{Definition}
 \theoremstyle{remark}

 \numberwithin{equation}{section}
\journal{Mathematical Methods in the Applied Sciences}

\begin{document}

\begin{frontmatter}



\title{The Extended Paley-Wiener Theorem over the Hardy-Sobolev Spaces}


\author[author1]{Detian Liu}
\author[author2]{Haichou Li}
\author[author3]{Kit Ian Kou}
\address[author1]{College of Mathematics and Informatics, South China Agricultural University, Guangzhou, 510640, China. Email:20212115003@stu.scau.edu.cn}
\address[author2]{ Corresponding author. College of Mathematics and Informatics, South China Agricultural University, Guangzhou, 510640, China. Email: hcl2016@scau.edu.cn} 
\address[author3]{Department of Mathematics, Faculty of Science and Technology, University of Macau, Macau, P. R. China. Email:kikou@umac.mo} 

\begin{abstract}
We examine how the square-integrable function subspaces are transformed using the holomorphic Fourier transform. On account of this, the extended Paley-Wiener theorem over the Hardy-Sobolev spaces is produced. The theorem also asserts that the reproducing kernel of the Hardy-Sobolev spaces can be found. We discuss the relationship between the disc and the upper half-plane.

\end{abstract}

\footnotetext{2020 Mathematics Subjuection Classification. Primary 46C07; Secondary 30A99, 47G10}


\begin{keyword}
 Holomorphic Fourier transform \sep Reproducing kernel \sep Hardy-Sobolev spaces \sep Paley-Wiener theorem



\end{keyword}

\end{frontmatter}


\section{Introduction}
\label{}
Function space, as a branch of functional analysis, has  been widely developed. Classical Hardy space has formed a rich theoretical system \cite{1,6}. It consists of functions in the Lebesgue space $L^p$ ($p > 1$) that are smooth inside the domain and have finite integral growth on the boundary.

\subsection{Related works}
S.L. Sobolev developed a novel kind of function space during his investigation of the elastic wave problem in the 1930s. This space later came to be known as Sobolev space, which is a Banach space maked up by weakly differentiable functions.

Due to the smoothness of Sobolev space functions, they are crucial to the study of mathematical analysis. In harmonic analysis and partial differential equations (PDE for short) theory, when $0<p\le1$, Hardy space is the best alternative to Lebesgue space $L^p$. When $p>1$, the Sobolev space requires that the weak derivative of the function belongs to $L^p$\cite{15}. 

As a result, it makes sense to research Hardy-Sobolev spaces, which demand that a function's derivative also belongs to Hardy spaces. As a significant analytic function space, Hardy-Sobolev space contains a number of well-known classical function spaces, including Dirichlet space, Hardy space, Bergmann space, and others. Since it has a more sophisticated spatial structure, it will have a more deep outcome than generic classical space, which seems sense to research the relative characteristics of this space.

Hardy-Sobolev spaces also combine the smoothness attributes of Sobolev space with the integral development characteristics of Hardy space. 
The functions have smoothness inside the domain and finite integral growth on the boundary, which makes the Hardy-Sobolev space particularly helpful for expressing issues with boundary influences, including boundary value problems. 

In a word, the Hardy-Sobolev spaces have wide applications in areas such as PDE, boundary value problems, harmonic analysis, and more. Exactly, it provides a suitable framework for handling functions subject to boundary conditions, and offers  tools for  problem-solving in these areas\cite{12,13,14}.

\subsection{Notations}
Let $\mathbb{R}^+$ be the interval (0,$\infty$) and $\mathbb{C}^+$,$\mathbb{C}_+$ respectively denote the upper half-plane and the right-hand half-plane of complex plane, i.e., $\mathbb{C}^+=\{z\in\mathbb{C} : \Im z>0\}$, $\mathbb{C}_+=\{z\in\mathbb{C} : \Re z>0\}$. We let $L^{p}(\mathbb{R}^+)$ be the usual Lebesgue space on (0,$\infty$) corresponding to its norm
$$\Vert f \Vert_p=\left(\int_{0}^{\infty}\vert f(t) \vert^{p}dt\right)^\frac{1}{p}<\infty,\qquad f\in L^{p}(\mathbb{R}^+).$$
In particular, $L^{2}(\mathbb{R}^+)$ is a Hilbert space with its inner product given by
$$\langle f,g \rangle_2=\int_{0}^{\infty}f(t) \overline{g(t)}dt,\qquad f,g\in L^{2}(\mathbb{R}^+). $$

We will usually write $\langle \cdot,\cdot \rangle$ to denote the inner product of Hilbert space. For $1\le p<\infty$ the space $\mathcal{T}^{(n)}_p$ consists of all the measurable functions on $\mathbb{R}^+$ which belongs to the closure of $C^{\infty}_{c}(\mathbb{R}^+)$ corresponding to the norm
$$\Vert f \Vert_{\mathcal{T}^{(n)}_p}=\left(\int_{0}^{\infty}\vert f^{(n)}(t)t^n \vert^p dt\right)^\frac{1}{p}<\infty,\qquad f\in C^{\infty}_{c}(\mathbb{R}^+).$$
In particular, the space $\mathcal{T}^{(n)}_1$, a subalgebra of $L^{1}(\mathbb{R}^+)$ , is a convolution Banach algebra with the norm $\Vert \cdot \Vert_{\mathcal{T}^{(n)}_1}$. It was first introduced to approach Cauchy problems in \cite{7} and it has been studied in \cite{8,9} recently. 

When it turns to consider $p=2$, it was proved that the range $\mathcal{L}(\mathcal{T}^{(n)}_2)$, where $\mathcal{L}$ denotes to the Laplace transform given by
$$\mathcal{L}(f)(z)=\int_{0}^{\infty}f(t)e^{-zt}dt,\qquad f\in L^{2}(\mathbb{R}^+), z\in \mathbb{C}_+ ,$$
is an isometric isomorphism onto the Hardy-Sobolev space $H^{(n)}_2 (\mathbb{C}_+)$ of all the holomorphic functions $F$ in $H_2 (\mathbb{C}_+)$ satisfying condition $z^k F^{(k)}\in H_2 (\mathbb{C}_+)$ for every $k=0,1,\dots,n$ \cite{4,5}. Here the notation $H_2 (\mathbb{C}_+)$ denotes the classical Hardy space of all analytic functions $F$ over the right-hand half-plane $\mathbb{C}_+$ such that
$$\Vert F \Vert_{H_2 (\mathbb{C}_+)}=\sup_{x>0} \left(\frac{1}{2\pi}\int_{-\infty}^{+\infty}\vert F(x+iy) \vert^2 dy\right)^\frac{1}{2}<\infty.$$

Naturally, it inspires us to consider the Holomorphic Fourier transform $\mathcal{F}$ acting on the space $\mathcal{T}^{(n)}_2$, where $\mathcal{F}$ is given by
$$\mathcal{F}(f)(z)=\int_{0}^{\infty}f(t)e^{itz}dt,\qquad f\in L^{2}(\mathbb{R}^+),z\in \mathbb{C}^+.$$
Note that the Holomorphic Fourier transform $\mathcal{F}:L^{2}(\mathbb{R}^+) \to H_2 (\mathbb{C}^+)$ is also an isometric isomorphism. We will show that the range $\mathcal{F}(\mathcal{T}^{(n)}_2)$ is characterized as the Hilbert space $H^{(n)}_2 (\mathbb{C}^+)$, where the space $H^{(n)}_2 (\mathbb{C}^+)$ and $H_2 (\mathbb{C}^+)$ is similar to the definition above that the domain becomes to the upper half-plane $\mathbb{C}^+$.

Point evalutation functionals on $H_2 (\mathbb{C}^+)$ are bounded so that it is a Reproducing Kernel Hilbert Space(RKHS). In fact, its reproducing kernel $K(z,w)=K_{w}(z)$ is given by 
$$K_{w}(z)=\frac{i}{z-\overline{w}},\qquad z,w\in \mathbb{C}^+.$$
The space $H^{(n)}_2 (\mathbb{C}^+)$ is also a RKHS and one of the main purposes in this paper is to determine the reproducing kernel of $H^{(n)}_2 (\mathbb{C}^+)$. In the rest of paper, we will sometimes write $H^{(n)}_2$ and $H_2$ for short to represent the Hardy-Sobolev spaces and Hardy space over the upper half-plane. 

Here are the key notations and acronyms used in this paper.
\begin{table}[htbp]
	\caption{Notations}
	\begin{center}
		\begin{tabular}{cc}
			\toprule
			Notation  & Description \\
			\midrule
			$\mathbb{R^+}$ & Interval (0,$\infty$) \\
			$\mathbb{C}^+$ & Upper half-plane of complex plane \\
			$\mathbb{D}$ & unit disc of complex plane \\
			$\Im$ & Imaginary part \\
			$H_2$ & Hardy space \\
			$\mathcal{T}_2^{(n)}$ & Lebesgue-Sobolev space \\
			$H^{(n)}_2$ & Hardy-Soboelv space \\
			$\Vert \cdot \Vert$ & Norm \\
			$\langle \cdot,\cdot \rangle$ & Inner product \\
			$\mathcal{F}$ & Holomorphic Fourier transform \\
			RKHS & Reproducing Kernel Hilbert Space \\
			\bottomrule
		\end{tabular}
	\end{center}
\end{table}
\subsection{Paper contributions}
In this research, we focus on the properties of Hardy-Sobolev spaces defined on the upper half-plane. Royo established the extended form of the Paley-Wiener theorem for Hardy-Sobolev spaces on the right-hand half-plane by applying the standard Paley-Wiener theorem \cite{5}. Matache enhanced Royo's proof by introducing the equivalent inner product and constructing the associated auxiliary function, and he used the extended Paley-Wiener theorem to determine the reproducing kernel of Hardy-Sobolev spaces \cite{4}. Based on the work of these two researchers, this paper extends the famous Paley-Wiener theorem to Hardy-Sobolev spaces on the upper half-plane and computes the reproducing kernel of this space.
Our key findings are as follows.
\begin{itemize}
	\item[A.] Let $\mathcal{F}$ be the Holomorphic Fourier transform $\mathcal{F}:L^{2}(\mathbb{R}^+) \to H_2 (\mathbb{C}^+)$. Then
	$$\mathcal{F}(\mathcal{T}^{(n)}_2)=H^{(n)}_2.$$
	That is, Holomorphic Fourier transform $\mathcal{F}$ is an isometric isomorphism from $\mathcal{T}_2^{(n)}$ onto $H_2^{(n)}$.
	\item[B.] Let $n$ be a positive integer. Then the function $K_n$ is the reproducing kernel of $H^{(n)}_2$,which is given by 
	$$K_n(z,w)=\frac{1}{((n-1)!)^2}\int_{0}^{1}\int_{0}^{1}(1-y)^{n-1}(1-x)^{n-1}\frac{i}{yz-x\overline{w}}dxdy,\qquad z,w\in\mathbb{C^+}.$$
	\item[C.] If $F\in H^{(n)}_2$, then $z^{k-1}F^{(k-1)},k=1,2,\dots,n$ can be extended continuously at all nonzero points on the real axis, including the point at infinity. 
\end{itemize}
The contributions of this work are summarized below.
\begin{itemize}
	\item[1.] We extend the Paley-Wiener theorem and derive a version of the theorem regarding the Holomorphic Fourier transform about Hardy-Sobolev spaces. Furthermore, we derive the reproducing kernel with respect to the integral form of Hardy-Sobolev spaces over the upper half-plane, which extends Hardy-Sobolev space theory.
	
	\item[2.]We establish the link between Hardy-Sobolev spaces in the upper half-plane and classical Hardy spaces on the disc, which provides additional tools and approaches for dealing with Hardy-Sobolev spaces.
	
\end{itemize}


\subsection{Paper outlines}
The paper is organized as follows. In Section 2, we define  spaces $L^2(t^n)$ and $\mathcal{T}^{(n)}_2$ with their corresponding norm, which are introduced in \cite{5}. They are connected by the integration operator $W^{-n}$. Then we give some propositions about these spaces. In section 3, we introduce the Holomorphic Fourier transform and some formulas
related to the transform and $n$-times derivation, which will be used to prove the extended Paley-Wiener theorem later. Then we give the definition of the space $H^{(n)}_2$ and finish the proof. Moreover, we compute the reproducing kernel of $H^{(n)}_2$ by applying the theorem. In addition, according to the Cayley transform, we give the connection of spaces  $H^{(n)}_2$ with the usual Hardy space $H_2(\mathbb{D})$ on the disc $\mathbb{D}$.

\section{Preliminary}
We use $W^{-1}$ to denote the integration operator on the space $C^{\infty}_{c}(\mathbb{R}^+)$ which is given by
$$W^{-1}\varphi(t)=\int_{t}^{\infty}\varphi(s)ds,\qquad \varphi\in C^{\infty}_{c}(\mathbb{R}^+),t\ge0.$$
For any function $\varphi\in C^{\infty}_{c}(\mathbb{R}^+)$, it is easy to show that $W^{-1}\varphi\in C^{\infty}_{c}(\mathbb{R}^+)$ and so that we can define the integration operator $W^{-n}=W^{-1}(W^{-(n-1)})$ for every positive integer $n$. By applying Fubini’s theorem we get
$$W^{-n}\varphi(t)=\frac{1}{(n-1)!}\int_{t}^{\infty}(s-t)^{n-1}\varphi(s)ds,\qquad \varphi\in C^{\infty}_{c}(\mathbb{R}^+),t\ge0.$$
In fact, the index $n$ can be extended to any positive real number $\alpha$, for more details about $W^{-\alpha}$, see \cite{5,16}.

Next we consider the operator $W^{-n}$ act on the Hilbert space $L^2(t^n)$.
\begin{defn}(\cite{5})
	For $n\in\mathbb{N}$, let $L^2(t^n)$ be the space of all complex measurable function $\varphi$ on $\mathbb{R}^+$ whose norm satisfies
	$$\Vert \varphi \Vert_{L^2 (t^n)}=\left(\int_{0}^{\infty}\vert \varphi(t)t^n \vert^2dt\right)^\frac{1}{2}<\infty.$$
\end{defn}
\begin{prop}(\cite{4})
	For $n\in\mathbb{N}$ and $\varphi\in L^2 (t^n)$. Then for $1\le k\le n$, we have 
	$$W^{-k}\varphi(t)=\frac{1}{(k-1)!}\int_{t}^{\infty}(s-t)^{n-1}\varphi(s)ds,\qquad t>0.$$
	Moreover, $W^{-k}\varphi$ is $(k-1)$-times differentiable with $(W^{-k}\varphi)^{(l)}=(-1)^l W^{-(k-l)}\varphi$ for every $1\le l\le k-1$, and $(-1)^k(W^{-k}\varphi)^{(k)}=\varphi$. 
\end{prop}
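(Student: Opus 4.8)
\emph{Reading of the statement.} I take the kernel in the displayed formula to be $(s-t)^{k-1}$ rather than $(s-t)^{n-1}$, so that it reduces to the Fubini expression for $W^{-k}$ recorded just above it when $\varphi\in C^\infty_c(\mathbb R^+)$; since $W^{-k}$ is $k$-fold integration this is the only consistent choice. The plan is: (i) show the integral $I_k\varphi(t):=\tfrac1{(k-1)!}\int_t^\infty(s-t)^{k-1}\varphi(s)\,ds$ converges absolutely for each $t>0$ whenever $1\le k\le n$ and depends continuously on $\varphi\in L^2(t^n)$; (ii) identify $I_k$ with the unique continuous extension of $W^{-k}$ from $C^\infty_c(\mathbb R^+)$ to $L^2(t^n)$, which proves the formula; (iii) differentiate the integral to read off the recursion.

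\emph{Convergence and the formula.} Fix $t>0$ and $1\le k\le n$. From $0\le s-t\le s$ and $k-1\ge0$ we get $(s-t)^{k-1}\le s^{k-1}$, so by Cauchy--Schwarz
$$\int_t^\infty(s-t)^{k-1}|\varphi(s)|\,ds\le\int_t^\infty s^{\,k-1-n}\bigl(s^n|\varphi(s)|\bigr)\,ds\le\left(\int_t^\infty s^{\,2(k-1-n)}\,ds\right)^{\!1/2}\Vert\varphi\Vert_{L^2(t^n)},$$
and $\int_t^\infty s^{2(k-1-n)}\,ds<\infty$ precisely because $2(k-1-n)<-1$, i.e.\ $k\le n$. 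Hence $I_k\varphi(t)$ is well defined, $\varphi\mapsto I_k\varphi(t)$ is a bounded functional on $L^2(t^n)$ for each fixed $t>0$ (equivalently $g_t(s):=(s-t)^{k-1}s^{-n}\mathbf 1_{(t,\infty)}(s)\in L^2(\mathbb R^+)$ and $I_k\varphi(t)=\tfrac1{(k-1)!}\langle s^n\varphi,\overline{g_t}\rangle_2$), and dominated convergence makes $t\mapsto I_k\varphi(t)$ continuous. For $\varphi\in C^\infty_c(\mathbb R^+)$ the Fubini identity stated above gives $W^{-k}\varphi=I_k\varphi$. Since $\varphi\mapsto t^n\varphi$ is a unitary from $L^2(t^n)$ onto $L^2(\mathbb R^+)$ carrying $C^\infty_c(\mathbb R^+)$ onto $C^\infty_c(\mathbb R^+)$, the latter is dense in $L^2(t^n)$; picking $\varphi_j\in C^\infty_c(\mathbb R^+)$ with $\varphi_j\to\varphi$ in $L^2(t^n)$ gives $W^{-k}\varphi_j(t)=I_k\varphi_j(t)\to I_k\varphi(t)$ for every $t>0$, which identifies $W^{-k}\varphi$ with $I_k\varphi$ and yields the displayed formula.

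\emph{Differentiation and recursion.} For $k\ge2$ differentiate $I_k\varphi(t)=\tfrac1{(k-1)!}\int_t^\infty(s-t)^{k-1}\varphi(s)\,ds$ by the Leibniz rule: the boundary term at $s=t$ vanishes since $(s-t)^{k-1}|_{s=t}=0$, while $\partial_t(s-t)^{k-1}=-(k-1)(s-t)^{k-2}$ and, by the estimate above applied with $k-1$ replaced by $k-2$ (the needed $k-2<n-\tfrac12$ again follows from $k\le n$), $(s-t)^{k-2}|\varphi(s)|$ has an $L^1$ dominating function uniform for $t$ in a neighbourhood of any $t_0>0$. Hence $(W^{-k}\varphi)'=-W^{-(k-1)}\varphi$, and iterating gives $(W^{-k}\varphi)^{(l)}=(-1)^lW^{-(k-l)}\varphi$ for $1\le l\le k-1$. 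In particular $(W^{-k}\varphi)^{(k-1)}=(-1)^{k-1}W^{-1}\varphi$ with $W^{-1}\varphi(t)=\int_t^\infty\varphi(s)\,ds$ (the $k=1$ case of Step (i)); since $\varphi\in L^1_{\mathrm{loc}}(\mathbb R^+)$ by Cauchy--Schwarz on compact subintervals of $(0,\infty)$, this function is locally absolutely continuous with $(W^{-1}\varphi)'=-\varphi$ a.e., so $(W^{-k}\varphi)^{(k)}=(-1)^k\varphi$, i.e.\ $(-1)^k(W^{-k}\varphi)^{(k)}=\varphi$.

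\emph{Main obstacle.} The delicate point throughout is the exponent constraint $k\le n$: it is exactly what makes the tail integrals $\int_t^\infty s^{2(k-1-n)}\,ds$ and $\int_t^\infty s^{2(k-2-n)}\,ds$ finite, hence what licenses both the absolute convergence defining $W^{-k}\varphi$ and the differentiation under the integral sign. A secondary subtlety is measure-theoretic: because $\varphi$ lies only in $L^2(t^n)$, the top-order identity $(-1)^k(W^{-k}\varphi)^{(k)}=\varphi$ holds almost everywhere and ``$(k-1)$-times differentiable'' must be read as ``$(W^{-k}\varphi)^{(k-1)}$ is locally absolutely continuous'' — this should be made explicit rather than glossed over.
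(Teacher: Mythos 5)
The paper itself gives no proof of this proposition: it is quoted from reference \cite{4} as a known result, so there is no internal argument to compare against. Your proof is correct and is the standard one: you rightly read the kernel as $(s-t)^{k-1}$ (the $(s-t)^{n-1}$ in the statement is a typo, as the $\frac{1}{(k-1)!}$ factor and the $k$-fold integration both force), the Cauchy--Schwarz estimate with the tail integral $\int_t^\infty s^{2(k-1-n)}\,ds$ is exactly where the hypothesis $k\le n$ enters, and the density/pointwise-evaluation argument plus differentiation under the integral and the final absolute-continuity step for $W^{-1}\varphi$ give the stated derivative relations in the correct a.e.\ sense. One small rigor point: since $\varphi$ is only defined a.e., the ``boundary term at $s=t$'' in your Leibniz step is not literally meaningful pointwise; it is cleaner to argue via the difference quotient, noting that for $k\ge 2$ the moving-endpoint contribution satisfies $\bigl|\tfrac1h\int_t^{t+h}(s-t)^{k-1}\varphi(s)\,ds\bigr|\le |h|^{k-2}\int_t^{t+h}|\varphi(s)|\,ds\to 0$ because $\varphi\in L^1_{\mathrm{loc}}(\mathbb{R}^+)$ — a one-line fix that does not affect the rest of your argument.
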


Take $m\in\mathbb{N}$ and Borel function $\varphi$, we get $W^{-n}\varphi\in L^{2}(\mathbb{R}^+)$ for all $\varphi\in L^2(t^n)$ by applying Hardy’s inequality \cite{2} 
$$\int_{0}^{\infty}\left(W^{-m}\varphi(t)\right)^2dt\le \left(\frac{\Gamma(\frac{1}{2})}{\Gamma(m+\frac{1}{2})}\right)^2\int_{0}^{\infty}(t^m\varphi(t))^2dt.$$
As a matter of fact, the integration operator $W^{-n}:L^2(t^n)\to L^{2}(\mathbb{R}^+)$ is injective, which enables us to define Hardy-Lebesgue space $\mathcal{T}^{(n)}_2$.
\begin{defn}(\cite{5})
	For $n\in\mathbb{N}$, let $\mathcal{T}^{(n)}_2$ be the range of operator $W^{-n}$ on $L^2 (t^n)$, i.e., 
	$$\mathcal{T}^{(n)}_2=W^{-n}(L^2 (t^n)),$$
	which is a subspace of $L^{2}(\mathbb{R}^+)$. It is clear that $\mathcal{T}^{(n)}_2$ is a Hilbert space with the inner product
	$$\langle f,g\rangle_{\mathcal{T}^{(n)}_2}=\int_{0}^{\infty}f^{(n)}(t)\overline{g^{(n)}(t)}t^{2n}dt,\qquad f,g\in\mathcal{T}^{(n)}_2.$$
\end{defn}
For $n\ge k\ge 0$, let $\varphi(t)=\vert f^{(n)}(t)\vert t^k$ and $m=n-k$, by applying Hardy’s inequality  again we get that the size of the space $\mathcal{T}^{(n)}_2$ decreases as $n$ increases, i.e.,
\begin{align}
	\mathcal{T}^{(n)}_2\subseteq \mathcal{T}^{(k)}_2\subseteq L^{2}(\mathbb{R}^+),\qquad n\ge k\ge 0.
\end{align}
Moreover, by using Holder’s inequality and taking $\varphi=(-1)^nf^{(n)}$ in Proposition 2.2 we obtain
\begin{align}
	\vert f^{(k)}(t)\vert\le C_{n,k}t^{-k-\frac{1}{2}}\Vert f\Vert_{\mathcal{T}^{(n)}_2},\qquad f\in\mathcal{T}^{(n)}_2,0\le k\le n-1,t>0,
\end{align}
where $C_{n,k}$ is a constant related to $n$ and $k$.
\begin{lem}(\cite{4}) 
	Take $n\ge 0$, the $(n+1)$-square matrix $C_n=(c_{i,j})_{0\le i,j\le n}$ defined by
	\begin{align*}
		c_{i,j}=\begin{cases}
			0,&i<j;\\
			\binom{i}{j}\frac{i!}{j!},&i\ge j.
		\end{cases}
	\end{align*}
	satisfies:
	
	(i) the matrix $C_n$ is invertible and $C^{-1}_n=((-1)^{i+j}c_{i,j})_{0\le i,j\le n}$.
	
	(ii) $\forall f\in\mathcal{T}^{(n)}_2,t>0,$ we have 
	\begin{flalign*}
		&(t^nf)^{(n)}(t)=\sum_{k=0}^{n}c_{n,k}t^kf^{(k)}(t),\\
		&t^nf^{(n)}(t)=\sum_{k=0}^{n}(-1)^{k+n}c_{n,k}(t^kf)^{(k)}(t).
	\end{flalign*}
\end{lem}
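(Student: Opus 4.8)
The plan is to treat the two parts separately, and to observe that part (ii) really drives part (i). First I would prove the expansion
$(t^n f)^{(n)}(t)=\sum_{k=0}^n c_{n,k}\,t^k f^{(k)}(t)$
by induction on $n$, using nothing more than the Leibniz rule $(t^n f)^{(n)} = \sum_{j} \binom{n}{j}(t^n)^{(j)} f^{(n-j)}$ together with $(t^n)^{(j)} = \frac{n!}{(n-j)!}\,t^{n-j}$. Matching powers of $t$ gives the coefficient of $t^k f^{(k)}(t)$ as $\binom{n}{n-k}\frac{n!}{k!} = \binom{n}{k}\frac{n!}{k!}$, which is exactly $c_{n,k}$ for $k\le n$ (and $0$ for $k>n$, consistent with the definition). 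The differentiability needed to justify writing $f^{(k)}$ for $k\le n$ is available from Proposition 2.2 applied to $f = W^{-n}\varphi$, so the manipulation is legitimate on $(0,\infty)$.

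Next, for part (i), I would package the first expansion as a linear relation between the two vectors $\bigl((t^k f)^{(k)}(t)\bigr)_{0\le k\le n}$ — wait, more precisely between $\bigl((t^k f)^{(k)}\bigr)_k$ and $\bigl(t^k f^{(k)}\bigr)_k$. The cleanest route: the first identity, read for each $m\le n$ in place of $n$, says that the lower-triangular matrix $C_n=(c_{i,j})$ sends the column vector $v(t):=\bigl(t^j f^{(j)}(t)\bigr)_{0\le j\le n}$ to the column vector $w(t):=\bigl((t^i f)^{(i)}(t)\bigr)_{0\le i\le n}$, i.e. $w = C_n v$. Since $c_{i,i} = i! \ne 0$, $C_n$ is invertible. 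To identify $C_n^{-1}$, I would verify directly that $D_n := \bigl((-1)^{i+j}c_{i,j}\bigr)$ satisfies $D_n C_n = I$: the $(i,k)$ entry is $\sum_{j} (-1)^{i+j} c_{i,j} c_{j,k} = (-1)^{i+k}\sum_{j=k}^{i} (-1)^{i-j}\binom{i}{j}\frac{i!}{j!}\binom{j}{k}\frac{j!}{k!}$. Pulling out $\frac{i!}{k!}$ and using $\binom{i}{j}\binom{j}{k} = \binom{i}{k}\binom{i-k}{j-k}$, the inner sum becomes $\frac{i!}{k!}\binom{i}{k}\sum_{l=0}^{i-k}(-1)^{i-k-l}\binom{i-k}{l} = \frac{i!}{k!}\binom{i}{k}(1-1)^{i-k}$, which is $0$ for $i>k$ and $1$ (times the right normalization) for $i=k$ after checking the diagonal gives $(-1)^{0}c_{i,i}c_{i,i}\cdot$(correction) — in fact the diagonal of $D_nC_n$ is $c_{i,i}\cdot(-1)^{2i}c_{i,i}^{-1}$-type bookkeeping, so one just confirms it equals $1$. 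Hence $C_n^{-1} = D_n$.

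Finally, the second identity in (ii), $t^n f^{(n)}(t) = \sum_{k=0}^n (-1)^{k+n} c_{n,k} (t^k f)^{(k)}(t)$, is just the bottom row of $v = C_n^{-1} w = D_n w$, read off from the explicit form of $D_n$. So once (i) is in hand and the first expansion of (ii) is proved, the second expansion is immediate. The main obstacle I anticipate is purely combinatorial: cleanly evaluating the alternating binomial sum in $D_n C_n = I$. That is a standard finite-difference identity (the $(i-k)$-th forward difference of a polynomial of degree $<i-k$ vanishes), so no genuine analytic difficulty arises — the analytic content is entirely contained in Proposition 2.2, which supplies the differentiability of $f$ that makes every derivative symbol above meaningful.
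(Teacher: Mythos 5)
The paper does not prove this lemma at all: it is quoted from reference \cite{4} and used as a black box, so there is no in-paper argument to compare against. Judged on its own, your proof is essentially correct and self-contained. The Leibniz computation $(t^nf)^{(n)}=\sum_j\binom{n}{j}\frac{n!}{(n-j)!}t^{n-j}f^{(n-j)}$ does give the coefficient $c_{n,k}=\binom{n}{k}\frac{n!}{k!}$ directly (no induction is actually needed, despite your opening sentence), Proposition 2.2 supplies the required differentiability of $f\in\mathcal{T}^{(n)}_2$, and packaging the identities for all $m\le n$ as $w=C_nv$ with $v=(t^jf^{(j)})_j$, $w=((t^if)^{(i)})_i$ correctly reduces the second identity in (ii) to reading off the bottom row of $C_n^{-1}w$. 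The verification $D_nC_n=I$ via $\binom{i}{j}\binom{j}{k}=\binom{i}{k}\binom{i-k}{j-k}$ and the vanishing alternating sum $(1-1)^{i-k}$ is the standard and correct route. Two small blemishes, neither fatal: you assert $c_{i,i}=i!$, but in fact $c_{i,i}=\binom{i}{i}\frac{i!}{i!}=1$ (the invertibility conclusion from nonzero diagonal entries of a lower-triangular matrix stands either way), and the parenthetical ``$c_{i,i}\cdot(-1)^{2i}c_{i,i}^{-1}$-type bookkeeping'' for the diagonal is muddled — since $c_{i,i}=1$ the diagonal entry of $D_nC_n$ is simply $(-1)^{2i}c_{i,i}^2=1$, consistent with your general formula at $i=k$. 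With those corrections the write-up would be a complete proof of the lemma the paper leaves to \cite{4}.
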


\begin{prop}(\cite{4})
	Take $f,g\in\mathcal{T}^{(n)}_2$. Then 
	$$\langle f,g\rangle_{\mathcal{T}^{(n)}_2}=\langle (t^nf)^{(n)},(t^ng)^{(n)}\rangle_2,$$
	where $\langle f,g \rangle_2=\int_{0}^{\infty}f(t) \overline{g(t)}dt$.
\end{prop}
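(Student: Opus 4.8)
The plan is to prove the identity first for $f,g\in C^{\infty}_{c}(\mathbb{R}^+)$, where every integration by parts is free of boundary contributions, and then to pass to general $f,g\in\mathcal{T}^{(n)}_2$ by density. The computational core is a pointwise identity for the Euler operator $\theta:=t\frac{d}{dt}$. Starting from the classical relations $t^{m}\frac{d^{m}}{dt^{m}}=\theta(\theta-1)\cdots(\theta-m+1)$ and $(\theta-j)(t^{m}h)=t^{m}(\theta+m-j)h$, one commutes the factor $t^{n}$ to the left of the product and obtains, for every smooth $h$ on $\mathbb{R}^+$,
$$(t^{n}h)^{(n)}=\prod_{j=1}^{n}(\theta+j)\,h=\prod_{m=1}^{n}\Big(S+\tfrac{2m-1}{2}\Big)h,\qquad t^{n}h^{(n)}=\prod_{j=0}^{n-1}(\theta-j)\,h=\prod_{m=1}^{n}\Big(S-\tfrac{2m-1}{2}\Big)h,$$
where $S:=\theta+\tfrac12=t\frac{d}{dt}+\tfrac12$ and $a_m:=\tfrac{2m-1}{2}$. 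So $(t^{n}h)^{(n)}$ and $t^{n}h^{(n)}$ are the same product of commuting first-order factors, with the signs of the constants flipped.

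Next, fix $f,g\in C^{\infty}_{c}(\mathbb{R}^+)$, so that all the functions above again lie in $C^{\infty}_{c}(\mathbb{R}^+)$ and no integrability problem arises. The crucial structural fact is that $S$ is skew-symmetric on $C^{\infty}_{c}(\mathbb{R}^+)$: integrating by parts in $\langle\theta\phi,\psi\rangle_2=\int_{0}^{\infty}t\phi'(t)\overline{\psi(t)}\,dt$ (the boundary terms vanish) gives $\langle\theta\phi,\psi\rangle_2=-\langle\phi,(\theta+1)\psi\rangle_2$, hence $\langle S\phi,\psi\rangle_2=-\langle\phi,S\psi\rangle_2$. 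Because the factors $S\pm a_m$ are polynomials in $S$ they commute with each other, and both $(a_m-S)(a_m+S)$ and $\big(-(S+a_m)\big)(S-a_m)$ equal $a_m^{2}-S^{2}$; applying the skew-symmetry of $S$ repeatedly therefore yields
$$\langle(t^{n}f)^{(n)},(t^{n}g)^{(n)}\rangle_2=\Big\langle\prod_{m=1}^{n}\big(a_m^{2}-S^{2}\big)f,\;g\Big\rangle_2=\langle t^{n}f^{(n)},t^{n}g^{(n)}\rangle_2 .$$
Since $\langle t^{n}f^{(n)},t^{n}g^{(n)}\rangle_2=\int_{0}^{\infty}f^{(n)}(t)\overline{g^{(n)}(t)}t^{2n}\,dt=\langle f,g\rangle_{\mathcal{T}^{(n)}_2}$, the proposition holds for $f,g\in C^{\infty}_{c}(\mathbb{R}^+)$.

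To remove the smoothness hypothesis, take $f,g\in\mathcal{T}^{(n)}_2$ and choose $f_j,g_j\in C^{\infty}_{c}(\mathbb{R}^+)$ converging to $f,g$ in $\mathcal{T}^{(n)}_2$. By the case just proved, $\Vert(t^{n}f_j)^{(n)}-(t^{n}f_i)^{(n)}\Vert_2=\Vert f_j-f_i\Vert_{\mathcal{T}^{(n)}_2}$, so $\{(t^{n}f_j)^{(n)}\}$ is Cauchy in $L^{2}(\mathbb{R}^+)$, say with limit $G$. On the other hand (2.2) forces $f_j^{(k)}\to f^{(k)}$ uniformly on $[\delta,\infty)$ for every $0\le k\le n-1$ and every $\delta>0$, and $\Vert t^{n}(f_j^{(n)}-f^{(n)})\Vert_2\to0$ forces $f_j^{(n)}\to f^{(n)}$ in $L^{2}_{\mathrm{loc}}(\mathbb{R}^+)$; substituting into the first formula of Lemma 2.4(ii) gives $(t^{n}f_j)^{(n)}=\sum_{k=0}^{n}c_{n,k}t^{k}f_j^{(k)}\to\sum_{k=0}^{n}c_{n,k}t^{k}f^{(k)}=(t^{n}f)^{(n)}$ in $L^{2}_{\mathrm{loc}}(\mathbb{R}^+)$. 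Hence $G=(t^{n}f)^{(n)}$ a.e.; in particular $(t^{n}f)^{(n)}\in L^{2}(\mathbb{R}^+)$, and letting $j\to\infty$ in $\langle(t^{n}f_j)^{(n)},(t^{n}g_j)^{(n)}\rangle_2=\langle f_j,g_j\rangle_{\mathcal{T}^{(n)}_2}$ gives the proposition in full.

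I expect the only genuinely delicate step to be this last one. The function $\sum_{k=0}^{n}c_{n,k}t^{k}f^{(k)}$ produced by Lemma 2.4(ii) is a priori merely measurable, and its intermediate summands $t^{k}f^{(k)}$ with $k<n$ need not lie in $L^{2}(\mathbb{R}^+)$, so one has to use the pointwise bounds (2.2) together with the already-established $C^{\infty}_{c}$-isometry to guarantee that the whole sum is square-integrable and is the $L^{2}$-limit of the smooth approximants; handling those non-$L^{2}$ intermediate terms cleanly is the one place where care is needed. The Euler-operator identities of the first paragraph and the skew-symmetry of $t\frac{d}{dt}+\tfrac12$ are elementary by comparison.
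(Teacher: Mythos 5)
Your proof is correct, but note that the paper itself gives no argument for this proposition: it is quoted from \cite{4}, so there is no internal proof to compare against, and the route suggested by the cited literature is to expand $(t^nf)^{(n)}$ and $t^nf^{(n)}$ through the matrix $C_n$ of Lemma 2.4 and integrate by parts directly on $\mathcal{T}^{(n)}_2$, using the decay estimates (2) to kill the boundary terms. Your argument is genuinely different and works: the factorizations $(t^nh)^{(n)}=\prod_{j=1}^{n}(\theta+j)h$ and $t^nh^{(n)}=\prod_{j=0}^{n-1}(\theta-j)h$ with $\theta=t\frac{d}{dt}$, the skew-symmetry of $S=\theta+\tfrac12$ on $C^{\infty}_{c}(\mathbb{R}^+)$, and the observation that both sides reduce to $\langle\prod_m(a_m^2-S^2)f,g\rangle_2$ give a transparent algebraic reason for the equality on the dense class, and your limiting step correctly identifies the $L^2$-limit of $(t^nf_j)^{(n)}$ with $(t^nf)^{(n)}$ via Lemma 2.4(ii) and the pointwise bounds (2), which also yields (as it must) that $(t^nf)^{(n)}\in L^2(\mathbb{R}^+)$ for every $f\in\mathcal{T}^{(n)}_2$. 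The one point you should state explicitly rather than use tacitly is the density of $C^{\infty}_{c}(\mathbb{R}^+)$ in $\mathcal{T}^{(n)}_2$: this is precisely the assertion that the introduction's definition of $\mathcal{T}^{(n)}_2$ (closure of $C^{\infty}_{c}(\mathbb{R}^+)$ in the $\mathcal{T}^{(n)}_2$-norm) coincides with Definition 2.2 ($W^{-n}(L^2(t^n))$), a fact the paper takes from \cite{5} without proof; your proof is conditional on that identification, whereas the expand-and-integrate-by-parts route avoids any approximation. In short, your approach buys a cleaner structural explanation (two signed products of the same commuting first-order factors) at the price of the density/limit step you yourself flag as the delicate one; both are legitimate.
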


\section{Main results}
Recall that $H_2(\mathbb{C^+})$ is the space of holomorphic function on the upper half-plane $\mathbb{C^+}$ such that (\cite{1,6})
$$\Vert f\Vert_{H_2}=\sup_{y>0}\frac{1}{2\pi}\int_{-\infty}^{+\infty}\vert f(x+iy)\vert^2dx<\infty.$$
In fact, $H_2(\mathbb{C^+})$ is a Hilbert space with the inner product
$$\langle f,g\rangle_{H_2}=\frac{1}{2\pi}\int_{-\infty}^{+\infty}f^{*}(x)\overline{g^{*}(x)}dx,\qquad f,g\in H_2(\mathbb{C^+}),$$
where $f^{*}(x)=\lim\limits_{y\to 0}f(x+iy)$ a.e. $x\in\mathbb{R}$.

Let $f$ be any function in $L^2(\mathbb{R^+})$ and define the Holomorphic Fourier transform $\mathcal{F}$ given by
$$\mathcal{F}(f)(z)=F(z)=\int_{0}^{\infty}f(t)e^{itz}dt,\qquad z\in\mathbb{C^+},$$
If $z=x+iy\in\mathbb{C^+}$, then $\vert e^{itz}\vert=e^{-ty}$, which shows that the integral above exists as a Lebesgue integral. By applying a series of theorems about real and complex analysis ,we can prove that $F$ is a holomorphic function on $\mathbb{C^+}$ \cite{3}. The classical Paley-Wiener theorem (\cite{3}) shows that the Holomorphic Fourier transform
$\mathcal{F}:L^{2}(\mathbb{R}^+) \to H_2 (\mathbb{C}^+)$ is an isometric isomorphism, i.e., $F\in H_2(\mathbb{C^+})$ if and only if there exists an $f\in L^2(\mathbb{R^+})$ such that $F=\mathcal{F}(f)$ and 
$$\Vert F\Vert_{H_2}=\Vert f \Vert_2.$$

Next we will consider the Paley-Wiener type theorem on the Hardy-Sobolev spaces over the upper half-plane.
\begin{lem}
	$\forall n\in\mathbb{N},z\in\overline{C}^+\backslash\{0\}$ and $f\in\mathcal{T}^{(n)}_2,$
	\begin{flalign*}
		&(-1)^kz^k[\mathcal{F}(f)]^{(k)}(z)=\sum_{j=0}^{k}c_{k,j}\mathcal{F}(t^jf^{(j)})(z),\qquad k=0,1,\dots,n;\\
		&(-1)^k\mathcal{F}(t^kf^{(k)})(z)=\sum_{j=0}^{k}c_{k,j}z^j[\mathcal{F}(f)]^{(j)}(z),\qquad k=0,1,\dots,n,
	\end{flalign*}
	where $c_{k,j}$ is a constant defined by Lemma 2.4.
\end{lem}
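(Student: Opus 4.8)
The plan is to reduce both displayed identities to the single operator formula
\begin{equation}
\mathcal{F}\bigl((t^{k}f)^{(k)}\bigr)(z)=(-1)^{k}z^{k}\,[\mathcal{F}(f)]^{(k)}(z),\qquad 0\le k\le n,\ z\in\mathbb{C}^{+},
\tag{$\star$}
\end{equation}
and then to apply the linear map $\mathcal{F}$ to the two identities of Lemma~2.4(ii). Applying $\mathcal{F}$ to $(t^{k}f)^{(k)}=\sum_{j=0}^{k}c_{k,j}t^{j}f^{(j)}$ and rewriting the left-hand side by $(\star)$ gives the first formula at once. Applying $\mathcal{F}$ to $t^{k}f^{(k)}=\sum_{j=0}^{k}(-1)^{j+k}c_{k,j}(t^{j}f)^{(j)}$ and using $(\star)$ on each summand gives $\mathcal{F}(t^{k}f^{(k)})(z)=\sum_{j=0}^{k}(-1)^{j+k}c_{k,j}(-1)^{j}z^{j}[\mathcal{F}(f)]^{(j)}(z)$; since $(-1)^{j+k}(-1)^{j}=(-1)^{k}$, this is precisely the second formula. (Alternatively the second formula follows from the first by multiplication with the inverse matrix $C_{k}^{-1}=((-1)^{i+j}c_{i,j})$ of Lemma~2.4(i), but tracking the signs directly is shorter.)

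To prove $(\star)$ I would combine two elementary manipulations of the defining integral $\mathcal{F}(h)(z)=\int_{0}^{\infty}h(t)e^{itz}\,dt$. First, differentiating $j$ times under the integral sign, $[\mathcal{F}(f)]^{(j)}(z)=\int_{0}^{\infty}f(t)(it)^{j}e^{itz}\,dt=i^{j}\mathcal{F}(t^{j}f)(z)$, so that $\mathcal{F}(t^{j}f)(z)=(-i)^{j}[\mathcal{F}(f)]^{(j)}(z)$. Second, integrating by parts $j$ times in $\mathcal{F}\bigl((t^{j}f)^{(j)}\bigr)(z)=\int_{0}^{\infty}(t^{j}f)^{(j)}(t)e^{itz}\,dt$: all boundary contributions vanish (see below), leaving $\mathcal{F}\bigl((t^{j}f)^{(j)}\bigr)(z)=(-iz)^{j}\mathcal{F}(t^{j}f)(z)$. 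Multiplying the two gives $\mathcal{F}\bigl((t^{j}f)^{(j)}\bigr)(z)=(-iz)^{j}(-i)^{j}[\mathcal{F}(f)]^{(j)}(z)=(-1)^{j}z^{j}[\mathcal{F}(f)]^{(j)}(z)$, using $(-i)^{2}=-1$; this is $(\star)$.

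The real work — and the step I expect to be the main obstacle — is the analytic justification of these two manipulations. For $z=x+iy\in\mathbb{C}^{+}$ one has $\lvert e^{itz}\rvert=e^{-ty}$, so every convergence question reduces to the integrability of products $t^{m}\lvert f^{(l)}(t)\rvert e^{-ty}$: near $t=0$ the pointwise bound (2.2) gives $\lvert f^{(l)}(t)\rvert\le C_{n,l}t^{-l-1/2}\Vert f\Vert_{\mathcal{T}^{(n)}_{2}}$ for $l\le n-1$, so these products are $O(t^{\,m-l-1/2})$ and integrable when $m-l>-1/2$, while near $t=\infty$ the factor $e^{-ty}$ dominates every power and Cauchy--Schwarz against $f\in L^{2}(\mathbb{R}^{+})$ (respectively against $t^{k}f^{(k)}\in L^{2}(\mathbb{R}^{+})$, which holds for $0\le k\le n$ since $\mathcal{T}^{(n)}_{2}\subseteq\mathcal{T}^{(k)}_{2}$ by (2.1)) closes the estimate; this also makes the differentiation under the integral sign legitimate, locally uniformly in $z$, and shows every Fourier transform appearing in the statement is well defined on $\mathbb{C}^{+}$. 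For the integration by parts, the boundary term at $\infty$ vanishes thanks to $e^{-ty}$, and the boundary terms at $0$ vanish because a Leibniz expansion of $(t^{j}f)^{(l)}$ together with (2.2) gives $(t^{j}f)^{(l)}(t)=O(t^{\,j-l-1/2})\to 0$ as $t\to 0^{+}$ for every $l\le j-1$. The delicate endpoint case $k=n$, where no pointwise estimate on $f^{(n)}$ is available, is handled by writing $(t^{n}f)^{(n)}=\sum_{k=0}^{n}c_{n,k}t^{k}f^{(k)}\in L^{2}(\mathbb{R}^{+})$ via Lemma~2.4(ii) and performing the integrations by parts only on the lower-order pieces $(t^{n}f)^{(l)}$ with $l\le n-1$. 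Finally, once $(\star)$ and the two formulas are established on $\mathbb{C}^{+}$, they extend to $\overline{\mathbb{C}}^{+}\setminus\{0\}$ by continuity of both sides up to the punctured boundary.
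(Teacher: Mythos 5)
Your proposal is correct and follows essentially the same route as the paper: the key identity $(\star)$ is exactly the paper's $\mathcal{F}\bigl((t^{k}f)^{(k)}\bigr)(z)=(-iz)^{k}\mathcal{F}(t^{k}f)(z)=(-1)^{k}z^{k}[\mathcal{F}(f)]^{(k)}(z)$, obtained by integration by parts with the boundary terms at $0$ killed via a Leibniz expansion and the pointwise bound (2), followed by applying Lemma~2.4 (the paper derives the second formula via the inverse matrix $C_{k}^{-1}$, which you mention as an alternative to your direct sign-tracking). Your analytic justifications (the endpoint case $k=n$ and the extension to $\overline{\mathbb{C}}^{+}\setminus\{0\}$) are in fact more detailed than what the paper records.
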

\begin{proof}
	Let $j,k$ be nonnegative integers such that $0\le j\le k\le n$. Taking $h=t^kf$ and integrating by parts $k$ times, we get
	$$\mathcal{F}(h^{(k)})(z)=(-iz)^{k}\mathcal{F}(h)(z)-\sum_{j=0}^{k-1}(-iz)^{k-1-j}h^{(j)}(0).$$
	By applying Leibniz’s rule of derivation in $(t^kf)^{(j)}$ and the formula given by (1), we get
	$$(t^kf)^{(j)}(0)=\lim\limits_{t\to 0^+}(t^kf)^{(j)}(t)=0,\qquad j=0,1,\dots,k-1.$$
	Thus we have 
	$$\mathcal{F}((t^kf)^{(k)})(z)=(-iz)^k\mathcal{F}(t^kf)(z)=(-1)^kz^k[\mathcal{F}(f)]^{(k)}(z).$$
	Then
	$$(-1)^kz^k[\mathcal{F}(f)]^{(k)}(z)=\mathcal{F}((t^kf)^{(k)})(z)=\sum_{j=0}^{k}c_{k,j}\mathcal{F}(t^jf^{(j)})(z)$$
	where the second equality is based on Lemma 2.4.
	
	By applying the invertibility of the matrix defined by Lemma 2.4, we have
	$$\mathcal{F}(t^jf^{(j)})(z)=\sum_{k=0}^{j}(-1)^jc_{j,k}z^k[\mathcal{F}(f)]^{(k)}(z).$$
	Interchange the position of $j$ and $k$, we derive
	$$(-1)^k\mathcal{F}(t^kf^{(k)})(z)=\sum_{j=0}^{k}c_{k,j}z^j[\mathcal{F}(f)]^{(j)}(z).$$
\end{proof}
Now we define Hardy-Sobolev space over the upper half-plane.
\begin{defn}
	For $n\in\mathbb{N}$, let $H^{(n)}_2$ be the linear space of all holomorphic functions $F$ on $\mathbb{C^+}$ such that
	$$z^kF^{(k)}\in H_2,\qquad k=0,1,\dots,n.$$
	It is clear that $H^{(n)}_2\subseteq H^{(0)}_2=H_2$.
\end{defn}
Here is the Paley-Wiener theorem on Hardy-Sobolev spaces.
\begin{thm}
	Let $\mathcal{F}$ be the Holomorphic Fourier transform $\mathcal{F}:L^{2}(\mathbb{R}^+) \to H_2 (\mathbb{C}^+)$. Then
	$$\mathcal{F}(\mathcal{T}^{(n)}_2)=H^{(n)}_2.$$
	Thus, $H^{(n)}_2$ is a Hilbert space endowed with the inner product
	$$\langle F,G \rangle_{H^{(n)}_2}=\int_{\mathbb{-\infty}}^{+\infty}x^{2n}(F^{*})^{(n)}(x)\overline{(G^{*})^{(n)}(x)}dx,\qquad F,G\in H^{(n)}_2,$$
	and corresponding to the norm
	$$\Vert F\Vert_{H^{(n)}_2}=\Vert z^nF^{(n)}\Vert_{H_2},\qquad F\in H^{(n)}_2.$$
	Moreover, $\mathcal{F}$ is an isometric isomorphism from $\mathcal{T}^{(n)}_2$ onto $H^{(n)}_2$, i.e., $F,G\in H^{(n)}_2$ if and only if there exists $f,g\in\mathcal{T}^{(n)}_2$ and have the property
	$$\langle f,g \rangle_{\mathcal{T}^{(n)}_2}=\langle \mathcal{F}(f),\mathcal{F}(g)\rangle_{H^{(n)}_2}.$$
\end{thm}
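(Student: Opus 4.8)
The plan is to establish the set equality $\mathcal{F}(\mathcal{T}^{(n)}_2)=H^{(n)}_2$ by a double inclusion, using Lemma 3.1 as the bridge between the $t^k f^{(k)}$ on the Lebesgue side and the $z^k F^{(k)}$ on the holomorphic side, and then leverage the classical Paley-Wiener isometry $\mathcal{F}\colon L^2(\mathbb{R}^+)\to H_2(\mathbb{C}^+)$ together with Proposition 2.6 to get the norm identity. First I would take $f\in\mathcal{T}^{(n)}_2$ and set $F=\mathcal{F}(f)$. By the inclusion (2.1), $t^j f^{(j)}\in L^2(\mathbb{R}^+)$ for $0\le j\le n$ (indeed $\|t^j f^{(j)}\|_2\le C\|f\|_{\mathcal{T}^{(n)}_2}$ by the pointwise bound and Hardy's inequality argument underlying (2.1)), so each $\mathcal{F}(t^j f^{(j)})$ lies in $H_2$. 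The first identity of Lemma 3.1 then writes $(-1)^k z^k F^{(k)}(z)=\sum_{j=0}^k c_{k,j}\mathcal{F}(t^j f^{(j)})(z)$ as a finite linear combination of $H_2$-functions, hence $z^k F^{(k)}\in H_2$ for $k=0,1,\dots,n$; that is, $F\in H^{(n)}_2$. This proves $\mathcal{F}(\mathcal{T}^{(n)}_2)\subseteq H^{(n)}_2$.

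For the reverse inclusion, suppose $F\in H^{(n)}_2$. Since $F\in H_2=\mathcal{F}(L^2(\mathbb{R}^+))$, there is a unique $f\in L^2(\mathbb{R}^+)$ with $\mathcal{F}(f)=F$; I must show $f\in\mathcal{T}^{(n)}_2$, i.e., $f=W^{-n}\psi$ for some $\psi\in L^2(t^n)$. The natural route is induction on $n$: assuming the statement for $n-1$, the hypothesis $z^{n-1}F^{(n-1)}\in H_2$ combined with the relation $\mathcal{F}((t^{n-1}f)^{(n-1)})(z)=(-1)^{n-1}z^{n-1}F^{(n-1)}(z)$ from the proof of Lemma 3.1 shows $(t^{n-1}f)^{(n-1)}\in L^2(\mathbb{R}^+)$, and then one uses the structure of the integration operators $W^{-k}$ (Proposition 2.2) and Lemma 2.4 to unwind this into $f$ having an $n$-th weak derivative with $t^n f^{(n)}\in L^2(\mathbb{R}^+)$, equivalently $f\in\mathcal{T}^{(n)}_2$. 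Alternatively one argues directly: from $F\in H^{(n)}_2$ all the combinations $\sum_j c_{n,j}z^j F^{(j)}$ lie in $H_2$, so by the invertibility half of Lemma 3.1 each $\mathcal{F}(t^k f^{(k)})\in H_2$, whence $t^k f^{(k)}\in L^2(\mathbb{R}^+)$ for $k\le n$; taking $k=n$ gives $\|f\|_{\mathcal{T}^{(n)}_2}<\infty$, but one still has to verify that the formal derivatives $f^{(k)}$ are genuine (weak) derivatives and that $f$ lies in the closure defining $\mathcal{T}^{(n)}_2$ — this is where the injectivity of $W^{-n}$ and the density of $C_c^\infty(\mathbb{R}^+)$ are needed.

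Once the set equality holds, the isometry is quick: for $f,g\in\mathcal{T}^{(n)}_2$, Proposition 2.6 gives $\langle f,g\rangle_{\mathcal{T}^{(n)}_2}=\langle (t^nf)^{(n)},(t^ng)^{(n)}\rangle_2$, and applying the classical Paley-Wiener identity $\langle u,v\rangle_2=\langle\mathcal{F}(u),\mathcal{F}(v)\rangle_{H_2}$ with $u=(t^nf)^{(n)}$, $v=(t^ng)^{(n)}$, together with $\mathcal{F}((t^nf)^{(n)})(z)=(-1)^n z^n F^{(n)}(z)$ from Lemma 3.1, yields $\langle f,g\rangle_{\mathcal{T}^{(n)}_2}=\langle z^nF^{(n)},z^nG^{(n)}\rangle_{H_2}$. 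Taking boundary values identifies this with $\int_{-\infty}^{+\infty}x^{2n}(F^*)^{(n)}(x)\overline{(G^*)^{(n)}(x)}\,dx$ up to the normalizing constant, and setting $G=F$ gives $\|F\|_{H^{(n)}_2}=\|z^nF^{(n)}\|_{H_2}$. It then follows formally that $H^{(n)}_2$ is a Hilbert space (it is the isometric image of the Hilbert space $\mathcal{T}^{(n)}_2$) and that $\mathcal{F}$ is the asserted isometric isomorphism.

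I expect the main obstacle to be the reverse inclusion — specifically, passing from the integrability bounds $t^k f^{(k)}\in L^2(\mathbb{R}^+)$ back to the honest statement $f\in\mathcal{T}^{(n)}_2=W^{-n}(L^2(t^n))$, i.e., controlling the behavior of $f$ and its derivatives near $0$ and $\infty$ so that the boundary terms in the integration-by-parts identities vanish and $W^{-n}$ can legitimately be inverted. The forward inclusion and the isometry are essentially bookkeeping on top of Lemmas 2.4, 3.1, Proposition 2.6, and the classical Paley-Wiener theorem.
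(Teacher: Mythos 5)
Your forward inclusion and your isometry computation match the paper: $t^kf^{(k)}\in L^2(\mathbb{R}^+)$ for $f\in\mathcal{T}^{(n)}_2$, Lemma 3.1 plus classical Paley--Wiener give $z^kF^{(k)}\in H_2$, and the inner-product identity follows from Proposition 2.5, the classical isometry, and $\mathcal{F}\bigl((t^nf)^{(n)}\bigr)=(-1)^nz^n[\mathcal{F}(f)]^{(n)}$. The problem is the reverse inclusion, which you correctly identify as the main obstacle but do not actually close. Your ``direct'' route is circular: Lemma 3.1 is proved only for $f\in\mathcal{T}^{(n)}_2$ (its proof uses the vanishing of $(t^kf)^{(j)}(0)$, which comes from the pointwise estimate (2) valid on $\mathcal{T}^{(n)}_2$), so you cannot invoke its ``invertibility half'' for the bare $L^2$ preimage $f$ of $F$ --- at that stage $f^{(k)}$ need not even exist, so the expressions $\mathcal{F}(t^kf^{(k)})$ are not defined. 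Your induction sketch has the same defect: the relation $\mathcal{F}\bigl((t^{n-1}f)^{(n-1)}\bigr)=(-1)^{n-1}z^{n-1}F^{(n-1)}$ again presupposes regularity of $f$ that is exactly what has to be proved, and ``unwinding'' via $W^{-k}$ is left entirely unspecified.

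The paper avoids this by never differentiating the unknown $f$. It forms $G(z)=(-1)^n\sum_{j=0}^{n}c_{n,j}z^jF^{(j)}(z)$, which lies in $H_2$ because $F\in H^{(n)}_2$, takes its $L^2$ preimage $g$, and builds an explicit candidate $h=W^{-n}(g/t^n)$ (note $g/t^n\in L^2(t^n)$ by definition, so $h\in\mathcal{T}^{(n)}_2$ with $t^nh^{(n)}=g$ by Proposition 2.2). Lemma 3.1 may legitimately be applied to $h$, giving $G=(-1)^n\sum_j c_{n,j}z^j[\mathcal{F}(h)]^{(j)}$; subtracting the defining expression for $G$ in terms of $F=\mathcal{F}(f)$ and using the Leibniz identity of Lemma 2.4 yields $\bigl[z^n\mathcal{F}(h-f)\bigr]^{(n)}=0$, so $z^n\mathcal{F}(h-f)$ is a polynomial of degree at most $n-1$. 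Since a function of the form $P(z)z^{-n}$ can belong to $H_2$ only if $P\equiv 0$, injectivity of $\mathcal{F}$ gives $f=h\in\mathcal{T}^{(n)}_2$. This auxiliary-function-plus-polynomial argument is the missing idea in your proposal; without it (or an equivalent regularization argument showing $f$ has the required weak derivatives), the reverse inclusion is not established.
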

\begin{proof}
	Suppose $f\in\mathcal{T}^{(n)}_2$, then for $0\le k\le n$, it is easy to see that $t^kf^{(k)}\in L^2(\mathbb{R^+})$. 
	
	By applying Lemma 3.1 and the classical Paley-Wiener Theorem, we derive that $z^k[\mathcal{F}(f)]^{(k)}\in H_2$. Thus $\mathcal{F}(f)\in H^{(n)}_2$.
	
	Conversely, provided that $F\in H^{(n)}_2\subseteq H_2$, then according to the classical Paley-Wiener theorem, there exists a function $f\in L^2(\mathbb{R^+})$ such that $F=\mathcal{F}(f)$. 
	
	All we need is to prove that $f\in\mathcal{T}^{(n)}_2$. To arrive this aim, considering the function
	$$G(z)=(-1)^n\sum_{j=0}^{n}c_{n,j}z^jF^{(j)}(z),\qquad z\in\mathbb{C^+},$$
	we get $G\in H_2$ since $F\in H^{(n)}_2$.
	
	Therefore, there exists a function $g\in L^2(\mathbb{R^+})$ such that $G=\mathcal{F}(g)$. What's more, we define another function
	$$h(s)=-\frac{1}{(n-1)!}\int_{s}^{\infty}(s-t)^{n-1}\frac{g(t)}{t^n}dt.$$
	
	Note that $\frac{g(t)}{t^n}\in L^2(t^n)$, it is clear that it belongs to $\mathcal{T}^{(n)}_2$. By applying proposition 2.2, we derive $g=t^nh^{(n)}$.
	
	Now according to Lemma 3.1, we have
	$$G(z)=\mathcal{F}(g)(z)=\mathcal{F}(t^nh^{(n)})(z)=(-1)^n\sum_{j=0}^{n}c_{n,j}z^j[\mathcal{F}(h)]^{(j)}(z),\qquad z\in\mathbb{C^+}.$$
	Therefore, since $F=\mathcal{F}(f)$, we get
	$$(-1)^n[z^n\mathcal{F}(h-f)]^{(n)}(z)=(-1)^n\sum_{j=0}^{n}c_{n,j}z^j[\mathcal{F}(h-f)]^{(j)}(z)=0.$$
	It shows that $z^n\mathcal{F}(h-f)=P_n(z)$ where $P_n(z)$ denotes the polynomial with its degree less than or equal to $n-1$.
	
	Note that $P_n(z)z^{-n}=\mathcal{F}(h-f)\in H_2$, we get $\mathcal{F}(h-f)=0$. Then by the injectivity of $\mathcal{F}$, we derive $f=h\in\mathcal{T}^{(n)}_2$. Therefore, we have shown that $\mathcal{F}(\mathcal{T}^{(n)}_2)=H^{(n)}_2$.
	
	Take $f,g\in\mathcal{T}^{(n)}_2$, then $F=\mathcal{F}(f),G=\mathcal{F}(g)$ lies in $H^{(n)}_2$. By proposition 2.5, we have
	\begin{align*}
		\langle f,g \rangle_{\mathcal{T}^{(n)}_2}&=\langle(t^nf)^{(n)},(t^ng)^{(n)}\rangle_2\\
		&=\langle \mathcal{F}[(t^nf)^{(n)}],\mathcal{F}[(t^ng)^{(n)}]\rangle_{H_2}\\
		&=\langle (-1)^nz^n[\mathcal{F}(f)]^{(n)},(-1)^nz^n[\mathcal{F}(g)]^{(n)}\rangle_{H_2}\\
		&=\langle F,G\rangle_{H^{(n)}_2}.
	\end{align*}
\end{proof}
Let $X$ be a set, recall that $\mathcal{H}$ is a Reproducing Kernel Hilbert Space or, more briefly, an RKHS on $X$, if it is a Hilbert space of all complex functions on $X$ with its inner product $\langle \cdot,\cdot\rangle$ satisfying the condition that the evaluation functional, $E_x:\mathcal{H}\to\mathbb{C}$, defined by $E_x(f)=f(x)$, is bounded for every $x\in X$.

If $\mathcal{H}$ is an RKHS on $X$, then an application of the Riesz representation theorem shows that the linear evaluation functional is given by the inner product with a unique vector in $\mathcal{H}$. 

Thus, for each $x\in X$, there exists a unique vector $k_x\in\mathcal{H}$ such that for every $f\in\mathcal{H}, f(x)=E_x(f)=\langle f,k_x\rangle$. The function $k_x$ is called kernel-functions and the span generated by all of kernel-functions is dense in $\mathcal{H}$.

The function $K:X\times X\to\mathbb{C}$ defined by $K(x,y)=k_y(x)=\langle k_y,k_x\rangle,x,y\in X$ is said to be the reproducing kernel of $\mathcal{H}$\cite{10}. Point evaluation functions on $H^{(n)}_2$ are continuous so that $H^{(n)}_2$ is a RKHS.

Next we determine the reproducing kernel of $H^{(n)}_2$.
\begin{thm}
	Let $n$ be a positive integer. Then the function $K_n$ defined on $\mathbb{C^+}\times\mathbb{C^+}$ by 
	$$K_n(z,w)=\frac{1}{((n-1)!)^2}\int_{0}^{1}\int_{0}^{1}(1-y)^{n-1}(1-x)^{n-1}\frac{i}{yz-x\overline{w}}dxdy,\qquad z,w\in\mathbb{C^+},$$
	is the reproducing kernel of $H^{(n)}_2$, i.e., if $K_{n,w}(z)=K_n(z,w)$ then
	$$K_{n,w}\in H^{(n)}_2\text{ and }F(w)=\langle F,K_{n,w}\rangle_{H^{(n)}_2},\forall F\in H^{(n)}_2.$$
\end{thm}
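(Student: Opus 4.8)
The plan is to transport the problem to $\mathcal{T}^{(n)}_2$ through the isometric isomorphism $\mathcal{F}$ of Theorem 3.3, locate the representing vector there by the Riesz lemma, and push it forward again by $\mathcal{F}$, identifying the result with the claimed double integral after an explicit computation. For $w\in\mathbb{C^+}$ consider the linear functional
$$L_w(f)=\int_0^\infty f(t)e^{itw}\,dt=\mathcal{F}(f)(w),\qquad f\in\mathcal{T}^{(n)}_2.$$
It is bounded: either invoke that $H^{(n)}_2$ is an RKHS, or argue directly from the pointwise estimate $|f(t)|\le C_{n,0}\,t^{-1/2}\Vert f\Vert_{\mathcal{T}^{(n)}_2}$ and $|e^{itw}|=e^{-t\Im w}$ that $|L_w(f)|\le C(\Im w)^{-1/2}\Vert f\Vert_{\mathcal{T}^{(n)}_2}$. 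By the Riesz representation theorem there is a unique $k_{n,w}\in\mathcal{T}^{(n)}_2$ with $L_w(f)=\langle f,k_{n,w}\rangle_{\mathcal{T}^{(n)}_2}$ for all $f$. Set $K_{n,w}:=\mathcal{F}(k_{n,w})$. By Theorem 3.3, $K_{n,w}\in H^{(n)}_2$ and, for every $F=\mathcal{F}(f)\in H^{(n)}_2$,
$$\langle F,K_{n,w}\rangle_{H^{(n)}_2}=\langle f,k_{n,w}\rangle_{\mathcal{T}^{(n)}_2}=L_w(f)=F(w);$$
thus $K_{n,w}$ is the reproducing kernel, and only its identification with the stated formula remains.

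To find $k_{n,w}$, write $f=W^{-n}\varphi$ and $k_{n,w}=W^{-n}\psi$ with $\varphi,\psi\in L^2(t^n)$; by Proposition 2.2, $f^{(n)}=(-1)^n\varphi$ and $k_{n,w}^{(n)}=(-1)^n\psi$, so Definition 2.3 gives $\langle f,k_{n,w}\rangle_{\mathcal{T}^{(n)}_2}=\int_0^\infty\varphi(s)\overline{\psi(s)}s^{2n}\,ds$. On the other hand, inserting the integral expression for $W^{-n}$ and applying Fubini,
$$L_w(f)=\frac{1}{(n-1)!}\int_0^\infty\varphi(s)\Big[\int_0^s(s-t)^{n-1}e^{itw}\,dt\Big]\,ds.$$
Comparing these for all $\varphi\in L^2(t^n)$ forces
$$\psi(s)=\frac{s^{-2n}}{(n-1)!}\int_0^s(s-t)^{n-1}e^{-it\overline{w}}\,dt .$$
A short estimate shows $\psi\in L^2(t^n)$: the inner integral is $O(s^n)$ as $s\to0^+$, so $s^n\psi(s)$ stays bounded near the origin, and after $t\mapsto s-t$ it equals $e^{-is\overline{w}}\int_0^s t^{n-1}e^{it\overline{w}}\,dt=O(e^{-s\Im w})$ as $s\to\infty$; hence $k_{n,w}=W^{-n}\psi\in\mathcal{T}^{(n)}_2$ as required.

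Finally, compute $K_{n,w}=\mathcal{F}(W^{-n}\psi)$ in the same way: Fubini yields
$$K_{n,w}(z)=\frac{1}{(n-1)!}\int_0^\infty\psi(s)\Big[\int_0^s(s-u)^{n-1}e^{iuz}\,du\Big]\,ds,$$
and substituting the formula for $\psi$ produces a triple integral in $s,t,u$. The scaling substitutions $t=sx$, $u=sy$ with $x,y\in(0,1)$ turn the inner integrals into $s^n\int_0^1(1-x)^{n-1}e^{-isx\overline{w}}\,dx$ and $s^n\int_0^1(1-y)^{n-1}e^{isyz}\,dy$; the resulting factor $s^{2n}$ cancels the weight $s^{-2n}$, and after one more Fubini
$$K_{n,w}(z)=\frac{1}{((n-1)!)^2}\int_0^1\int_0^1(1-x)^{n-1}(1-y)^{n-1}\Big[\int_0^\infty e^{is(yz-x\overline{w})}\,ds\Big]\,dx\,dy .$$
Since $\Im(yz-x\overline{w})=y\Im z+x\Im w>0$ for $x,y\in(0,1)$ and $z,w\in\mathbb{C^+}$, the inner integral equals $i/(yz-x\overline{w})$, which is exactly $K_n(z,w)$.

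The main obstacle is bookkeeping the interchanges of integration rather than any conceptual point: one must verify absolute integrability before each use of Fubini, most delicately in the last display, where the integrand is $(1-x)^{n-1}(1-y)^{n-1}e^{-s(y\Im z+x\Im w)}$ on $(0,1)^2\times(0,\infty)$ — integrating first in $s$ gives $(y\Im z+x\Im w)^{-1}$, which is integrable over $(0,1)^2$ — and one must confirm that the weight $s^{-2n}$ creates no divergence at $s=0$, since the product of the two inner integrals is $O(s^{2n})$ there. Uniqueness of the Riesz representative then guarantees that the explicit $K_n$ is the reproducing kernel; as a consistency check, the stated kernel satisfies $K_n(z,w)=\overline{K_n(w,z)}$ after the relabelling $x\leftrightarrow y$.
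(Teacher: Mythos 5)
Your proposal is correct and follows essentially the same route as the paper: the paper likewise transports the evaluation functional to $\mathcal{T}^{(n)}_2$, exhibits the representing vector $g_{w,n}=W^{-n}\varphi_{w,n}$ with $\varphi_{w,n}$ exactly your $\psi$, and computes $\mathcal{F}(g_{w,n})$ by the same scaling substitution to reach the double-integral formula, the only difference being that the paper writes down $\varphi_{w,n}$ directly and verifies the identity, whereas you first invoke the Riesz representation theorem and then identify the representative. One small slip in your membership check: after the substitution $t\mapsto s-t$, the factor $\int_0^s t^{n-1}e^{it\overline{w}}\,dt$ grows like $s^{n-1}e^{s\Im w}$, so $\int_0^s(s-t)^{n-1}e^{-it\overline{w}}\,dt$ is only $O(s^{n-1})$ as $s\to\infty$, not $O(e^{-s\Im w})$; this polynomial bound still yields $s^n\psi(s)=O(s^{-1})$ at infinity, hence $\psi\in L^2(t^n)$, and the rest of your argument is unaffected.
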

\begin{proof}
	Take $w\in\mathbb{C^+}$ and $n\in \mathbb{N^*}$. Let
	$$\varphi_{w,n}(t)=\frac{1}{t^n}\int_{0}^{1}\frac{(1-x)^{n-1}}{(n-1)!}e^{-i\overline{w}tx}dx,\qquad t>0.$$
	We will show that $\varphi_{w,n}\in L^2(t^n)$. In fact,
	\begin{align*}
		\left(\int_{0}^{\infty}\vert\varphi_{w,n}(t)\vert^2t^{2n}dt\right)^\frac{1}{2}&\le \left(\int_{0}^{\infty}\left(\int_{0}^{1}\frac{(1-x)^{n-1}}{(n-1)!}e^{-(\Im w)tx}dx\right)^2dt\right)^\frac{1}{2}\\
		&\le\int_{0}^{1}\frac{(1-x)^{n-1}}{(n-1)!}\left(\int_{0}^{\infty}e^{-2(\Im w)tx}\right)^\frac{1}{2}dx\\
		&=\frac{\sqrt{\frac{\pi}{2}}}{\Gamma(n+\frac{1}{2})\sqrt{\Im w}}.
	\end{align*}
	Hence, $g_{w,n}(t)=W^{-n}\varphi_{w,n}(t)\in\mathcal{T}^{(n)}_2$ and
	$$g_{w,n}(t)=\frac{1}{((n-1)!)^2}\int_{t}^{\infty}\frac{(s-t)^{n-1}}{s^n}\int_{0}^{1}(1-x)^{n-1}e^{-i\overline{w}sx}dxds.$$
	By proposition 2.1, we have
	$$g_{w,n}^{(n)}(t)=(-1)^n\varphi_{w,n}(t)=\frac{(-1)^n}{t^n}\int_{0}^{1}\frac{(1-x)^{n-1}}{(n-1)!}e^{-i\overline{w}tx}dx=\frac{(-1)^n}{t^{2n}}\int_{0}^{t}\frac{(t-s)^{n-1}}{(n-1)!}e^{-i\overline{w}s}ds.$$
	
	Take $F\in H^{n}_2$. By Theorem 3.3, there exists a function $f\in\mathcal{T}^{(n)}_2$ such that $F=\mathcal{F}(f)$. 
	
	Then by applying Fubini's theorem, we have $\forall f\in\mathcal{T}^{(n)}_2$,
	\begin{align*}
		F(w)=\mathcal{F}(f)(w)=\int_{0}^{\infty}f(t)e^{iwt}dt&=(-1)^n\int_{0}^{\infty}\int_{t}^{\infty}\frac{(s-t)^{n-1}}{(n-1)!}f^{(n)}(s)ds\ e^{iwt}dt\\
		&=\int_{0}^{\infty}f^{(n)}(t)\int_{0}^{t}(-1)^n\frac{(t-s)^{n-1}}{(n-1)!}e^{iws}ds\\
		&=\int_{0}^{\infty}f^{(n)}(t)t^{2n}\overline{g^{(n)}_{w,n}(t)}dt\\
		&=\langle f,g_{w,n}\rangle_{\mathcal{T}^{(n)}_2}.
	\end{align*}
	On the other hand, use the change of variables $s=\frac{t}{y}$, we get
	$$g_{w,n}(t)=\int_{0}^{1}\int_{0}^{1}\frac{(1-y)^{n-1}(1-x)^{n-1}}{y[(n-1)!]^2}e^{\frac{-i\overline{w}tx}{y}}dxdy.$$
	Then for $z\in\mathbb{C^+}$,
	\begin{align*}
		\mathcal{F}(g_{w,n})(z)&=\int_{0}^{1}\int_{0}^{1}\frac{(1-y)^{n-1}(1-x)^{n-1}}{y[(n-1)!]^2}\mathcal{F}(e^{\frac{-i\overline{w}tx}{y}})(z)dxdy\\
		&=\int_{0}^{1}\int_{0}^{1}\frac{(1-y)^{n-1}}{(n-1)!}\frac{(1-x)^{n-1}}{(n-1)!}\frac{i}{yz-x\overline{w}}dxdy\\
		&=K_{n,w}(z).
	\end{align*}
	Therefore, by theorem 3.3, we have 
	$$F(w)=\langle f,g_{w,n}\rangle_{\mathcal{T}^{(n)}_2}=\langle F,K_{n,w}\rangle_{H^{(n)}_2},\qquad w\in\mathbb{C^+}.$$
\end{proof}

Recall that the usual Hardy space $H_2(\mathbb{D})$ on the unit disc $\mathbb{D}=\{z\in\mathbb{C}:\vert z\vert=1\}$ is endowed with the norm
$$\Vert F\Vert_{H_2(\mathbb{D})}=\sup_{0<r<1}\left(\frac{1}{2\pi}\int_{0}^{2\pi}\vert F(re^{i\theta})\vert^2d\theta\right)^\frac{1}{2},\qquad F\in H_2(\mathbb{D}).$$
The Cayley transform $\Phi(\lambda)=i\frac{1-\lambda}{1+\lambda},\lambda\in\mathbb{D}$ maps the unit disc $\mathbb{D}$ conformally onto the upper half-plane $\mathbb{C^+}$. Then $F\in H_2(\mathbb{C^+})$ if and only if $F_{\mathbb{D}}\in (1+\lambda)H_2(\mathbb{D})$ (\cite{6}), where
\begin{align}
	F_{\mathbb{D}}(\lambda)=F\left(i\frac{1-\lambda}{1+\lambda}\right),\qquad \lambda\in\mathbb{D}.
\end{align}
\begin{prop}
	Supposed $F$ is an holomorphic function on $\mathbb{C^+}$ and $F_{\mathbb{D}}$ is the function on the unit disc given by (3). Then $F\in H^{(1)}_2$ if and only if
	$$F_{\mathbb{D}}\in(1+\lambda)H_2(\mathbb{D})\text{\qquad and\qquad  }F^{\prime}_{\mathbb{D}}\in(1-\lambda)^{-1}H_2(\mathbb{D})$$
\end{prop}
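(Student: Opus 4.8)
The plan is to reduce the statement to the cited characterization $F\in H_2(\mathbb{C^+})$ if and only if $F_{\mathbb{D}}\in(1+\lambda)H_2(\mathbb{D})$, applied twice: once to $F$ itself and once to the auxiliary function $G(z)=zF'(z)$. By Definition~3.2, $F\in H^{(1)}_2$ means exactly that $F$ is holomorphic on $\mathbb{C^+}$ with $F\in H_2$ and $zF'\in H_2$. So the first condition, $F_{\mathbb{D}}\in(1+\lambda)H_2(\mathbb{D})$, already encodes $F\in H_2$, and it remains only to translate $zF'\in H_2$ into a condition on $F_{\mathbb{D}}$.

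First I would record the elementary Cayley-transform identities. With $z=\Phi(\lambda)=i\frac{1-\lambda}{1+\lambda}$ one computes $\Phi'(\lambda)=\frac{-2i}{(1+\lambda)^2}$, and differentiating $F_{\mathbb{D}}(\lambda)=F(\Phi(\lambda))$ gives $F'(\Phi(\lambda))=\frac{(1+\lambda)^2}{-2i}F'_{\mathbb{D}}(\lambda)$. Substituting this into $G(z)=zF'(z)$ and simplifying the product $i\frac{1-\lambda}{1+\lambda}\cdot\frac{(1+\lambda)^2}{-2i}$ yields
$$G_{\mathbb{D}}(\lambda)=G(\Phi(\lambda))=-\tfrac12(1-\lambda^2)F'_{\mathbb{D}}(\lambda)=-\tfrac12(1-\lambda)(1+\lambda)F'_{\mathbb{D}}(\lambda),\qquad\lambda\in\mathbb{D}.$$
Since $F$ holomorphic on $\mathbb{C^+}$ forces $G$ holomorphic on $\mathbb{C^+}$, the cited criterion applies to $G$ as well.

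Then I would apply that criterion to $G$: $zF'=G\in H_2(\mathbb{C^+})$ if and only if $G_{\mathbb{D}}\in(1+\lambda)H_2(\mathbb{D})$, that is, if and only if $-\tfrac12(1-\lambda)(1+\lambda)F'_{\mathbb{D}}(\lambda)=(1+\lambda)g(\lambda)$ for some $g\in H_2(\mathbb{D})$. Cancelling the common factor $(1+\lambda)$, this is equivalent to $(1-\lambda)F'_{\mathbb{D}}\in H_2(\mathbb{D})$, i.e. $F'_{\mathbb{D}}\in(1-\lambda)^{-1}H_2(\mathbb{D})$. Combining with the first equivalence gives the proposition.

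The only delicate point is the cancellation of the factor $(1+\lambda)$, which vanishes at the boundary point $\lambda=-1$; this is legitimate because one is equating functions holomorphic on all of $\mathbb{D}$ (note $-\tfrac12(1-\lambda)F'_{\mathbb{D}}$ is holomorphic on $\mathbb{D}$), and a nontrivial analytic factor can be divided out. The remaining chain-rule bookkeeping and the algebraic simplification are routine, so I do not anticipate a serious obstacle.
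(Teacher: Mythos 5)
Your proposal is correct and follows essentially the same route as the paper: reduce $F\in H^{(1)}_2$ to $F\in H_2(\mathbb{C^+})$ and $zF'\in H_2(\mathbb{C^+})$, apply the Cayley-transform criterion to both functions, and use the chain-rule identity $F'_{\mathbb{D}}(\lambda)=\frac{-2i}{(1+\lambda)^2}F'\left(i\frac{1-\lambda}{1+\lambda}\right)$ to convert $(zF')_{\mathbb{D}}\in(1+\lambda)H_2(\mathbb{D})$ into $F'_{\mathbb{D}}\in(1-\lambda)^{-1}H_2(\mathbb{D})$. The only difference is that you spell out the algebra and the (harmless, since $1+\lambda\neq 0$ on the open disc) cancellation step, which the paper leaves implicit.
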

\begin{proof}
	The condition $F\in H^{(1)}_2$ is equivalent to $F\in H_2(\mathbb{C^+})$ and $zF^{\prime}\in H_2(\mathbb{C^+})$. Then we have
	\begin{align}
		F_{\mathbb{D}}\in(1+\lambda)H_2(\mathbb{D})\text{\qquad and\qquad  }i\frac{1-\lambda}{1+\lambda}F^{\prime}\left(i\frac{1-\lambda}{1+\lambda}\right)\in(1+\lambda)H_2(\mathbb{D}).
	\end{align}
	Note that 
	$$F^{\prime}_{\mathbb{D}}(\lambda)=\frac{-2i}{(1+\lambda)^2}F^{\prime}\left(i\frac{1-\lambda}{1+\lambda}\right).$$
	we get 
	$$F_{\mathbb{D}}^{\prime}(\lambda)\in(1-\lambda)^{-1}H_2(\mathbb{D}).$$
\end{proof}
Of course Proposition 4.1 can be extended for $n>1$ in the same way so that we could deal with Hardy-Sobolev spaces on $\mathbb{D}$ instead on the upper half-plane.
\begin{prop}
	If $F\in H^{(n)}_2$, then $z^{k-1}F^{(k-1)},k=1,2,\dots,n$ can be extended continuously at all nonzero point on the real axis and the point at infinity is included.
\end{prop}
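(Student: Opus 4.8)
The plan is to reduce everything to one clean statement --- \emph{every $G\in H^{(1)}_2$ extends continuously to $\overline{\mathbb{C}^+}\setminus\{0\}$ and has a limit at $\infty$} --- and then to prove that statement by transporting $G$ to $\mathbb{R}^+$ through the Paley--Wiener isomorphism of Theorem 3.3 and splitting the pre-image at a finite scale. For the reduction, note that if $F\in H^{(n)}_2$ and $1\le k\le n$, then $G:=z^{k-1}F^{(k-1)}$ lies in $H^{(1)}_2$: indeed $G\in H_2$ since $k-1\le n$, and $zG'(z)=(k-1)z^{k-1}F^{(k-1)}(z)+z^{k}F^{(k)}(z)\in H_2$ since $k-1\le n$ and $k\le n$. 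Applying the key statement to $G=z^{k-1}F^{(k-1)}$ for each $k=1,\dots,n$ then gives exactly the asserted conclusion (with the limit at $\infty$ equal to $0$).

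So fix $G\in H^{(1)}_2$. By Theorem 3.3 with $n=1$ there is $g\in\mathcal{T}^{(1)}_2$ with $G=\mathcal{F}(g)$; recall $g$ is locally absolutely continuous on $(0,\infty)$ with $tg'\in L^2(\mathbb{R}^+)$, and that the pointwise estimate of Section 2 (the case $n=1$, $k=0$) gives $|g(t)|\le C\,t^{-1/2}$ for $t>0$. Split $g=g_0+g_1$ with $g_0=g\,\mathbf{1}_{(0,1]}$ and $g_1=g\,\mathbf{1}_{(1,\infty)}$, so $G=\mathcal{F}(g_0)+\mathcal{F}(g_1)$ on $\mathbb{C}^+$, and treat the two terms separately.

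For $g_0$: the bound $|g(t)|\le C t^{-1/2}$ makes $g_0\in L^1(\mathbb{R}^+)$, so $\mathcal{F}(g_0)(z)=\int_0^1 g(t)e^{itz}\,dt$ is continuous on all of $\overline{\mathbb{C}^+}$ by dominated convergence (since $|e^{itz}|\le1$ when $\Im z\ge0$), and $\mathcal{F}(g_0)(z)\to0$ as $z\to\infty$ in $\overline{\mathbb{C}^+}$: approximating $g_0$ in $L^1$ by $h\in C^\infty_c((0,\infty))$, the remainder contributes at most $\|g_0-h\|_1$ uniformly over $\overline{\mathbb{C}^+}$, while one integration by parts gives $|\mathcal{F}(h)(z)|\le|z|^{-1}\|h'\|_1$. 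For $g_1$: here $g_1$ need not be integrable near $+\infty$, but $g'\mathbf{1}_{(1,\infty)}\in L^1(\mathbb{R}^+)$ because $g'=t^{-1}(tg')$ with $t^{-1}\in L^2(1,\infty)$ and $tg'\in L^2$. Integrating by parts on $(1,\infty)$, where the endpoint term at $+\infty$ vanishes because $|g(t)|\le C t^{-1/2}\to0$, yields for $z\in\mathbb{C}^+$
$$\mathcal{F}(g_1)(z)=-\frac{g(1)e^{iz}}{iz}-\frac{1}{iz}\int_1^\infty g'(t)e^{itz}\,dt,$$
and the right-hand side extends continuously to $\overline{\mathbb{C}^+}\setminus\{0\}$ (the integral is continuous there by dominated convergence since $g'\mathbf{1}_{(1,\infty)}\in L^1$, and $z\mapsto 1/z$ is continuous off $0$) and tends to $0$ as $z\to\infty$, each term carrying a factor $1/z$ against a bounded quantity. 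Adding the two pieces proves the key statement.

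I expect the main obstacle to be the tail $g_1$: because $g$ itself is in general not integrable at $+\infty$, continuity of $\mathcal{F}(g_1)$ up to the boundary cannot be read off directly from dominated convergence. The device is to integrate by parts once so as to trade $g$ for $g'$ and then use the defining property $tg'\in L^2$ of $\mathcal{T}^{(1)}_2$, which by Cauchy--Schwarz forces $g'\in L^1(1,\infty)$ although $g\notin L^1(1,\infty)$; the boundary term thereby produced is absorbed by the $t^{-1/2}$ pointwise decay from the Section 2 estimate. The remaining points --- integrability of $g$ near $0$ and decay at $\infty$ --- are routine. (One can also bypass the reduction and argue directly from $z^{k-1}F^{(k-1)}(z)=(-1)^{k-1}\mathcal{F}\big((t^{k-1}f)^{(k-1)}\big)(z)$, running the same split on $(t^{k-1}f)^{(k-1)}=\sum_{j\le k-1}c_{k-1,j}t^{j}f^{(j)}$; there the identity $t\,(t^{j}f^{(j)})'=j\,t^{j}f^{(j)}+t^{j+1}f^{(j+1)}\in L^2$ plays the role of $tg'\in L^2$.)
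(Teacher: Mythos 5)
Your proof is correct, but it takes a genuinely different route from the paper's. The reduction to the case $n=1$ is identical (both use $z\frac{d}{dz}\bigl(z^{k-1}F^{(k-1)}\bigr)=(k-1)z^{k-1}F^{(k-1)}+z^{k}F^{(k)}$ to place $z^{k-1}F^{(k-1)}$ in $H^{(1)}_2$), but the core argument differs: the paper transports $F$ to the unit disc via the Cayley transform, uses Proposition 3.5 to get $F_{\mathbb D}\in(1+\lambda)H_2(\mathbb D)$ and $(1-\lambda)F_{\mathbb D}'\in H_2(\mathbb D)$, observes that $[(1-\lambda)F_{\mathbb D}]'\in H_2(\mathbb D)\subseteq H_1(\mathbb D)$, and then invokes the classical theorem that a holomorphic function on $\mathbb D$ with derivative in $H_1(\mathbb D)$ extends continuously to the closed disc, so that $F_{\mathbb D}$ extends to the circle except possibly at $\lambda=1$ (which the Cayley map sends to $0$, with $-1\mapsto\infty$). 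You instead stay on the half-plane: you represent $G=\mathcal F(g)$ with $g\in\mathcal T^{(1)}_2$ via Theorem 3.3, split $g$ at $t=1$, handle the near piece by $|g(t)|\le Ct^{-1/2}$ (estimate (2)) giving $g_0\in L^1$ and hence continuity up to $\overline{\mathbb C^+}$ by dominated convergence, and handle the tail by one integration by parts, using $tg'\in L^2$ together with Cauchy--Schwarz to get $g'\in L^1(1,\infty)$, the boundary term being controlled again by the $t^{-1/2}$ decay. Your checks hold up: the integration by parts is legitimate since $g$ is absolutely continuous on $[1,R]$, the endpoint term at $+\infty$ vanishes, and both pieces are visibly continuous on $\overline{\mathbb C^+}\setminus\{0\}$. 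What the paper's route buys is brevity (given Proposition 3.5 and the cited disc theorem) and a further illustration of the disc--half-plane dictionary the paper wants to advertise; what your route buys is self-containedness within the Paley--Wiener framework already proved in Section 3, no appeal to the $H_1(\mathbb D)$ boundary-continuity theorem, and a sharper conclusion, namely that each $z^{k-1}F^{(k-1)}$ has boundary limit $0$ at infinity, with an explicit $O(1/|z|)$ rate for the tail contribution.
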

\begin{proof}
	Take $F\in H^{(1)}_2$.  By proposition 4.1, we get
	$$F_{\mathbb{D}}\in(1+\lambda)H_2(\mathbb{D})\subseteq H_2(\mathbb{D})$$
	and 
	$$(1-\lambda)F^{\prime}_{\mathbb{D}}\in H_2(\mathbb{D}).$$
	
	Note that
	$$[(1-\lambda)F_{\mathbb{D}}(\lambda)]^{\prime}=-F_{\mathbb{D}}(\lambda)+(1-\lambda)F_{\mathbb{D}}^{\prime}(\lambda)\subseteq H_2(\mathbb{D})\subseteq H_1(\mathbb{D}).$$
	
	Then, the function $(1-\lambda)F_{\mathbb{D}}(\lambda)$ can be extended continuously on the unit circle, that is, $F_{\mathbb{D}}$ extends by continuity to the unit circle except possibly at 1.
	
	Since the Cayley transform maps 1 to 0 and -1 to $\infty$, the conclusion on the extensibility of $F$ by continuity follows.
	
	Now take $F\in H^{(n)}_2,n>1$.Note that
	$$z\frac{d}{dz}\left(z^{k-1}F^{(k-1)}(z)\right)=(k-1)z^{k-1}F^{(k-1)}(z)+z^kF^{k}(z),\qquad z\in\mathbb{C^+}.$$
	we get $z^{k-1}F^{(k-1)}(z)$ belongs to $H^{(1)}_2$ for $1\le k\le n$. 
	
	Therefore, the continutiy on the real axis of the functions $z^{k-1}F^{(k-1)},k=1,2,\dots,n$ follows by the previous discussion.
\end{proof}

\section{Conclusion}
In this study, we offer an extended Paley-Wiener theorem for Hardy-Sobolev spaces $H^{(n)}_2$ and use it to find the reproducing kernel $K_n$ of Hardy-Sobolev space on the upper half-plane. In addition, using the Cayley transform, we investigate the boundary behavior of functions in Hardy-Sobolev spaces.

\section*{Acknowledgements}
This work was supported by the NSF of China (Grant No. 12226318 and 12071155). This work was also partly supported by The Science and Technology Development Fund, Macau SAR (File no. FDCT/0036/2021/AGJ) and University of Macau (File no. MYRG2022-00108-FST).

\section*{Data availability}
Data sharing not applicable to this article as no datasets were generated or analysed during the current study.

\section*{Declarations}
The author declare no conflict of interest.





\bibliographystyle{unsrt}
\bibliography{reference}

\end{document}